\providecommand{\keywords}[1]{\textbf{\textit{Key words:}} #1}
\providecommand{\subjclass}[1]{\textbf{\textit{2000 MSC:}} #1}
\newtheorem{theorem}{Theorem}
\newtheorem{definition}{Definition}
\newtheorem{lemma}{Lemma}
\newtheorem{proposition}{Proposition}
\newtheorem{remark}{Remark}
\title{Asymptotic behaviors of linear advanced systems of differential	equations}
\author{Mouataz Billah Mesmouli}
\affil{\textit{Mathematics Department, Faculty of Science, University of Ha'il, Kingdom of Saudi Arabia} \\
\texttt{mesmoulimouataz@hotmail.com, m.mesmouli@uoh.edu.sa}}
\begin{document}
	\maketitle
	\begin{abstract}
		In this paper, we use the fundamental matrix solution of the system $y^{\prime }\left( t\right) =D\left( t\right) y\left( t\right)$, and the technique of the fixed point theorem to obtain sufficient conditions satisfying the convergence and exponential convergence of
		solutions for the linear system of advanced differential equations. The considered system with multiple variable advanced arguments is discussed as well. The obtained theorems generalize previous results of Dung \cite{dun}, from the one dimension to the $n$ dimension.

\noindent \keywords{Fixed points, asymptotic behaviors, advanced systems, exponential stability, fundamental matrix solution.} \\
\subjclass{47H10, 34D05, 34K20.}
    \end{abstract}
\maketitle

\section{Introduction and Preliminaries}

The study of advanced differential equations began in the middle of the last
century by Myschkis \cite{mys} and Bellman \& Cooke \cite{belm}, this type
of equations has been studied considerably by many authors see for example 
\cite{tunc, mal, lie, p1, p2, shah}. However the advanced systems of
differential equations has not been studied before, for this reason, in this
paper we have studied the asymptotic behaviors of linear advanced systems of
differential equations.

The important techniques used in the literature to investigate the
qualitative behaviors of paths of linear and non-linear differential
equations, without finding the explicit solutions, are known as the second
Lyapunov function(al) method, perturbation theory, fixed point method, the
variation of constants formula and so on (see \cite{bur1, bur2} and the
references therein).

Dung N T in \cite{dun}, studied the asymptotic behaviors of the following
linear advanced differential equations%
\begin{equation}
x^{\prime }\left( t\right) +a\left( t\right) x\left( t+h\left( t\right)
\right) +b\left( t\right) x\left( t+r\left( t\right) \right) =0,\ \ t\geq
t_{0},  \label{a}
\end{equation}%
where $a\left( t\right) $ and $b\left( t\right) $ are continuous on $\left[
t_{0},+\infty \right) $, and $h\left( t\right) $, $r\left( t\right) $ are
continuous functions with $h\left( t\right) \geq 0$ and $r\left( t\right)
\geq 0$.

In this paper, we consider the linear system of advanced differential
equations%
\begin{equation}
x^{\prime }\left( t\right) +A\left( t\right) x\left( t+h\left( t\right)
\right) +B\left( t\right) x\left( t+r\left( t\right) \right) =0,\ \ t\geq
t_{0}\geq 0,  \label{eq1.1}
\end{equation}%
in which the functions $h\left( t\right) \geq 0$ and $r\left( t\right) \geq
0 $ are continuous on $\left[ t_{0},+\infty \right) $, $A,B:\left[
t_{0},+\infty \right) \rightarrow 
\mathbb{R}
^{2n}$ are $n\times n$ matrices with continuous real-valued functions as its
elements.

Motivated by \cite{dun} and some previous works (\cite{mes1, mes2}), we use
in the analysis the fundamental matrix solution of%
\begin{equation}
y^{\prime }\left( t\right) =D\left( t\right) y\left( t\right) ,  \label{1.4}
\end{equation}%
to invert the system (\ref{eq1.1}) into an integral system which we derive a
fixed point mapping. After then, we define prudently a suitable complete
space, depending on the initial condition, so that the mapping is a
contraction.

We recall now some definitions and results for fundamental matrix, see also 
\cite{c1}.

\begin{definition}
An $n\times n$ matrix function $t\rightarrow \Phi \left( t\right) $, defined
on an open interval $J$, is called a \emph{matrix solution} of the
homogeneous linear system (\ref{1.4}) if each of its columns is a (vector)
solution.
\end{definition}

\begin{definition}
The state transition matrix for the homogeneous linear system (\ref{1.4}) on
the open interval $J$ is the family of fundamental matrix solutions $%
t\rightarrow \Phi \left( t,r\right) $ parametrized by $r\in J$ such that $%
\Phi \left( r,r\right) =I$.
\end{definition}

\begin{proposition}[{\protect\cite[Proposition 2.14]{c1}}]
If $t\rightarrow \Phi \left( t\right) $ is a fundamental matrix solution for
the system (\ref{1.4}) on $J$, then $\Phi \left( t,r\right) :=\Phi \left(
t\right) \Phi ^{-1}\left( r\right) $ is the \emph{state transition matrix}.
Also, the state transition matrix satisfies the Chapman--Kolmogorov
identities%
\begin{equation*}
\Phi (r,r)=I,\ \ \Phi \left( t,s\right) \Phi \left( s,r\right) =\Phi \left(
t,r\right) ,
\end{equation*}%
and the identities%
\begin{equation*}
\Phi \left( t,s\right) ^{-1}=\Phi \left( s,t\right) ,\ \ \frac{\partial \Phi
\left( t,s\right) }{\partial s}=-\Phi \left( t,s\right) A\left( s\right) .
\end{equation*}
\end{proposition}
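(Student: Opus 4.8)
The plan is to verify every assertion directly from the defining formula $\Phi\left(t,r\right):=\Phi\left(t\right)\Phi^{-1}\left(r\right)$, exploiting only two structural facts about a fundamental matrix solution: that it satisfies the matrix differential equation $\Phi^{\prime}\left(t\right)=D\left(t\right)\Phi\left(t\right)$ on $J$ (since each of its columns is a vector solution of (\ref{1.4})), and that $\Phi\left(t\right)$ is nonsingular at every $t\in J$, so that $\Phi^{-1}\left(t\right)$ exists throughout. Everything else is algebra together with one application of the product rule.

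First I would confirm that $\Phi\left(t,r\right)$ really is the state transition matrix. Fixing $r\in J$ and setting $\Psi\left(t\right):=\Phi\left(t\right)\Phi^{-1}\left(r\right)$, the constant invertible factor $\Phi^{-1}\left(r\right)$ gives $\Psi^{\prime}\left(t\right)=\Phi^{\prime}\left(t\right)\Phi^{-1}\left(r\right)=D\left(t\right)\Phi\left(t\right)\Phi^{-1}\left(r\right)=D\left(t\right)\Psi\left(t\right)$, so each column of $\Psi$ solves (\ref{1.4}) and $\Psi$ is again a nonsingular fundamental matrix solution; moreover $\Psi\left(r\right)=\Phi\left(r\right)\Phi^{-1}\left(r\right)=I$. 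By the very definition of the state transition matrix this identifies $\Phi\left(\cdot,r\right)=\Psi$, as claimed.

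The Chapman--Kolmogorov identities and the inverse identity then reduce to cancellation. Indeed $\Phi\left(r,r\right)=\Phi\left(r\right)\Phi^{-1}\left(r\right)=I$, while
\[
\Phi\left(t,s\right)\Phi\left(s,r\right)=\Phi\left(t\right)\Phi^{-1}\left(s\right)\Phi\left(s\right)\Phi^{-1}\left(r\right)=\Phi\left(t\right)\Phi^{-1}\left(r\right)=\Phi\left(t,r\right),
\]
and $\Phi\left(t,s\right)^{-1}=\left(\Phi\left(t\right)\Phi^{-1}\left(s\right)\right)^{-1}=\Phi\left(s\right)\Phi^{-1}\left(t\right)=\Phi\left(s,t\right)$.

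The one genuinely computational step is the partial derivative in $s$, and this is where I would be most careful about the order of noncommutative factors. Since $\Phi\left(t,s\right)=\Phi\left(t\right)\Phi^{-1}\left(s\right)$ and $\Phi\left(t\right)$ is independent of $s$, I need $\tfrac{d}{ds}\Phi^{-1}\left(s\right)$. Differentiating $\Phi\left(s\right)\Phi^{-1}\left(s\right)=I$ yields $\Phi^{\prime}\left(s\right)\Phi^{-1}\left(s\right)+\Phi\left(s\right)\tfrac{d}{ds}\Phi^{-1}\left(s\right)=0$, whence $\tfrac{d}{ds}\Phi^{-1}\left(s\right)=-\Phi^{-1}\left(s\right)\Phi^{\prime}\left(s\right)\Phi^{-1}\left(s\right)=-\Phi^{-1}\left(s\right)D\left(s\right)$, using $\Phi^{\prime}\left(s\right)=D\left(s\right)\Phi\left(s\right)$. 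Multiplying on the left by $\Phi\left(t\right)$ gives $\tfrac{\partial\Phi\left(t,s\right)}{\partial s}=-\Phi\left(t\right)\Phi^{-1}\left(s\right)D\left(s\right)=-\Phi\left(t,s\right)D\left(s\right)$, which is the final identity with $D$ in place of the $A$ printed in the statement, the latter being evidently a typographical slip since (\ref{1.4}) is governed by $D$. No step presents a real obstacle; the only point to watch is preserving the left/right placement of factors when differentiating the inverse.
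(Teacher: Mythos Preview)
Your argument is correct and is the standard verification one finds in any textbook treatment, including Chicone's. Note, however, that the paper itself supplies no proof of this proposition at all: it is simply quoted, with citation, as \cite[Proposition 2.14]{c1}. So there is no ``paper's own proof'' to compare against; you have filled in what the author left to the reference. Your observation that the $A\left(s\right)$ in the displayed identity should read $D\left(s\right)$ (the coefficient matrix of system (\ref{1.4})) is also on point.
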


\begin{remark}
Notice that, $\Phi \left( t,t_{0}\right) $ will be $e^{\left( t-t_{0}\right)
D}$ if $D$ is a constant matrix.
\end{remark}

Throughout this paper, $\Phi \left( t,t_{0}\right) $ will denote a
fundamental matrix solution of the homogeneous (unperturbed) linear problem (%
\ref{1.4}).

\begin{lemma}
\label{lem.a}Let $x\left( t\right) :\left[ t_{0},+\infty \right) \rightarrow 
\mathbb{R}
^{n}$ be the solution of (\ref{eq1.1}). Then, the system (\ref{eq1.1}) is
equivalent to%
\begin{eqnarray}
x\left( t\right) &=&\Phi \left( t,t_{0}\right) x\left( t_{0}\right)
+\int_{t_{0}}^{t}\Phi \left( t,s\right) A\left( s\right) \int_{s}^{s+h\left(
s\right) }E_{x}\left( u\right) duds  \notag \\
&&+\int_{t_{0}}^{t}\Phi \left( t,s\right) B\left( s\right)
\int_{s}^{s+r\left( s\right) }E_{x}\left( u\right) duds.  \label{2.1}
\end{eqnarray}%
where $E_{x}\left( u\right) :=A\left( u\right) x\left( u+h\left( u\right)
\right) +B\left( u\right) x\left( u+r\left( u\right) \right) $.
\end{lemma}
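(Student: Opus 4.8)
The plan is to recognize that, although $\Phi$ is introduced abstractly, the representation (\ref{2.1}) only closes up when the coefficient matrix of the unperturbed system (\ref{1.4}) is taken to be $D\left( t\right) =-\left( A\left( t\right) +B\left( t\right) \right) $, i.e. the matrix obtained from (\ref{eq1.1}) by setting $h=r=0$. With this identification the lemma reduces to the variation-of-parameters formula for a suitably rewritten inhomogeneous linear system, together with one application of the fundamental theorem of calculus.

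First I would rewrite (\ref{eq1.1}). Since $x^{\prime }\left( t\right) =-E_{x}\left( t\right) $, adding $\left( A\left( t\right) +B\left( t\right) \right) x\left( t\right) $ to both sides gives
\begin{equation*}
x^{\prime }\left( t\right) =-\left( A\left( t\right) +B\left( t\right) \right) x\left( t\right) +A\left( t\right) \left[ x\left( t\right) -x\left( t+h\left( t\right) \right) \right] +B\left( t\right) \left[ x\left( t\right) -x\left( t+r\left( t\right) \right) \right] .
\end{equation*}
The key step is to express each bracket through $E_{x}$: because $x^{\prime }=-E_{x}$, the fundamental theorem of calculus yields
\begin{equation*}
x\left( t\right) -x\left( t+h\left( t\right) \right) =-\int_{t}^{t+h\left( t\right) }x^{\prime }\left( u\right) du=\int_{t}^{t+h\left( t\right) }E_{x}\left( u\right) du,
\end{equation*}
and likewise with $h$ replaced by $r$. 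Hence $x$ solves the inhomogeneous linear system $x^{\prime }\left( t\right) =D\left( t\right) x\left( t\right) +g\left( t\right) $ with $D=-\left( A+B\right) $ and forcing term $g\left( t\right) =A\left( t\right) \int_{t}^{t+h\left( t\right) }E_{x}\left( u\right) du+B\left( t\right) \int_{t}^{t+r\left( t\right) }E_{x}\left( u\right) du$.

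Next I would apply variation of parameters. Using $\partial _{s}\Phi \left( t,s\right) =-\Phi \left( t,s\right) D\left( s\right) $ from the Proposition, one has $\frac{d}{ds}\left[ \Phi \left( t,s\right) x\left( s\right) \right] =\Phi \left( t,s\right) \left[ x^{\prime }\left( s\right) -D\left( s\right) x\left( s\right) \right] =\Phi \left( t,s\right) g\left( s\right) $; integrating from $t_{0}$ to $t$ and using $\Phi \left( t,t\right) =I$ produces exactly (\ref{2.1}). For the converse, I would differentiate (\ref{2.1}): the Leibniz rule, together with $\partial _{t}\Phi \left( t,s\right) =D\left( t\right) \Phi \left( t,s\right) $ and $\Phi \left( t,t\right) =I$, returns $x^{\prime }\left( t\right) =D\left( t\right) x\left( t\right) +g\left( t\right) $, i.e. the rewritten system above.

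The main obstacle is the converse identification, namely showing that a solution of $x^{\prime }=Dx+g$ genuinely solves the advanced equation (\ref{eq1.1}); the difficulty is that the reverse use of the fundamental theorem of calculus is not pointwise. Setting $F\left( t\right) :=x^{\prime }\left( t\right) +E_{x}\left( t\right) $ and substituting $x^{\prime }\left( u\right) =F\left( u\right) -E_{x}\left( u\right) $ into the integrals defining $g$, I expect the $E_{x}$-terms to cancel and leave the homogeneous advanced integral equation
\begin{equation*}
F\left( t\right) =A\left( t\right) \int_{t}^{t+h\left( t\right) }F\left( u\right) du+B\left( t\right) \int_{t}^{t+r\left( t\right) }F\left( u\right) du.
\end{equation*}
I would then close the argument by invoking the standing smallness/contraction hypotheses of the paper (the same estimates that render the fixed-point map a contraction) to conclude $F\equiv 0$, which is precisely (\ref{eq1.1}). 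This residual-function computation, rather than the variation-of-parameters bookkeeping, is where the real care is required.
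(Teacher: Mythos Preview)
Your forward direction is essentially identical to the paper's: both identify $D(t)=-\bigl(A(t)+B(t)\bigr)$, use the fundamental theorem of calculus together with $x'=-E_{x}$ to rewrite the advanced terms as $\int_{t}^{t+h(t)}E_{x}(u)\,du$ and $\int_{t}^{t+r(t)}E_{x}(u)\,du$, and then apply variation of parameters. The only cosmetic difference is that the paper uses the ansatz $x(t)=\Phi(t,t_{0})\nu(t)$ while you differentiate $\Phi(t,s)x(s)$ in $s$; these are equivalent.

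For the converse, the paper simply writes ``the converse implication is easily obtained'' and stops. You go further and correctly isolate a genuine subtlety: differentiating (\ref{2.1}) only returns $x'=Dx+g$, and recovering (\ref{eq1.1}) from this amounts to showing that the residual $F(t):=x'(t)+E_{x}(t)$, which satisfies $F(t)=A(t)\int_{t}^{t+h(t)}F(u)\,du+B(t)\int_{t}^{t+r(t)}F(u)\,du$, vanishes identically. That does \emph{not} follow from the lemma's hypotheses alone (e.g.\ for $n=1$, $A\equiv 1$, $B\equiv 0$, $h\equiv 1$, any constant $F$ solves the residual equation), so importing the contraction condition (\ref{alfa}) is indeed what is needed to force $F\equiv 0$. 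In other words, you are more careful than the paper here, but your fix borrows a hypothesis that the lemma itself does not assume; as actually used in the paper, only the forward implication is required, since the argument in Theorem~\ref{Thm1} starts from a given solution of (\ref{eq1.1}) and uses the lemma to exhibit it as a fixed point of $\mathcal{H}$.
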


\begin{proof}
First we can write%
\begin{equation*}
x\left( t+h\left( t\right) \right) =x\left( t\right) +\int_{t}^{t+h\left(
t\right) }x^{\prime }\left( u\right) du\text{ and }x\left( t+r\left(
t\right) \right) =x\left( t\right) +\int_{t}^{t+r\left( t\right) }x^{\prime
}\left( u\right) du,
\end{equation*}%
substituting these relations into (\ref{eq1.1}), we obtain%
\begin{equation*}
x^{\prime }\left( t\right) +A\left( t\right) x\left( t\right) +A\left(
t\right) \int_{t}^{t+h\left( t\right) }x^{\prime }\left( u\right) du+B\left(
t\right) x\left( t\right) +B\left( t\right) \int_{t}^{t+r\left( t\right)
}x^{\prime }\left( u\right) du=0,
\end{equation*}%
then%
\begin{equation}
x^{\prime }\left( t\right) =-\left( A\left( t\right) +B\left( t\right)
\right) x\left( t\right) -A\left( t\right) \int_{t}^{t+h\left( t\right)
}x^{\prime }\left( u\right) du-B\left( t\right) \int_{t}^{t+r\left( t\right)
}x^{\prime }\left( u\right) du,  \label{eq2.4}
\end{equation}

\noindent Second, we put $D\left( t\right) :=-\left( A\left( t\right) +B\left(
t\right) \right) $ and we put $E_{x}\left( u\right) :=A\left( u\right)
x\left( u+h\left( u\right) \right) +B\left( u\right) x\left( u+r\left(
u\right) \right) $, then, the substitution of (\ref{eq1.1}) in (\ref{eq2.4})
yields%
\begin{equation*}
x^{\prime }\left( t\right) =D\left( t\right) x\left( t\right) +A\left(
t\right) \int_{t}^{t+h\left( t\right) }E_{x}\left( u\right) du+B\left(
t\right) \int_{t}^{t+r\left( t\right) }E_{x}\left( u\right) du.
\end{equation*}%
Now, if we assume that a solution in the interval $\left[ t_{0},\infty
\right) $ is given by%
\begin{equation}
x\left( t\right) =\Phi \left( t,t_{0}\right) \lambda \left( t\right) ,
\label{eq2.5}
\end{equation}%
where $\nu \left( t\right) $ is a differentiable vector valued function to
be determined in the following fashion.

By the product rule for differentiation we have that%
\begin{eqnarray*}
x^{\prime }\left( t\right) &=&\Phi ^{\prime }\left( t,t_{0}\right) \nu
\left( t\right) +\Phi \left( t,t_{0}\right) \nu ^{\prime }\left( t\right) \\
&=&D\left( t\right) \Phi \left( t,t_{0}\right) \nu \left( t\right) +\Phi
\left( t,t_{0}\right) \nu ^{\prime }\left( t\right) .
\end{eqnarray*}%
By the differential equation that $x\left( t\right) $ satisfies on $\left[
t_{0},\infty \right) $, this implies%
\begin{align*}
& D\left( t\right) \Phi \left( t,t_{0}\right) \nu \left( t\right) +\Phi
\left( t,t_{0}\right) \nu ^{\prime }\left( t\right) \\
& =D\left( t\right) x\left( t\right) +A\left( t\right) \int_{t}^{t+h\left(
t\right) }E_{x}\left( u\right) du+B\left( t\right) \int_{t}^{t+r\left(
t\right) }E_{x}\left( u\right) du,
\end{align*}%
then%
\begin{align*}
& D\left( t\right) \Phi \left( t,t_{0}\right) \nu \left( t\right) +\Phi
\left( t,t_{0}\right) \nu ^{\prime }\left( t\right) \\
& =D\left( t\right) \Phi \left( t,t_{0}\right) \nu \left( t\right) \\
& +A\left( t\right) \int_{t}^{t+h\left( t\right) }E_{x}\left( u\right)
du+B\left( t\right) \int_{t}^{t+r\left( t\right) }E_{x}\left( u\right) du.
\end{align*}%
Thus%
\begin{equation*}
\nu ^{\prime }\left( t\right) =\Phi \left( t_{0},t\right) A\left( t\right)
\int_{t}^{t+h\left( t\right) }E_{x}\left( u\right) du+\Phi \left(
t_{0},t\right) B\left( t\right) \int_{t}^{t+r\left( t\right) }E_{x}\left(
u\right) du.
\end{equation*}%
The previous expression, after integrating from $t_{0}$ to $t$ and using the
fact that $\nu \left( t_{0}\right) =x\left( t_{0}\right) $, implies%
\begin{align*}
\nu \left( t\right) & =x\left( t_{0}\right) +\int_{t_{0}}^{t}\Phi \left(
t_{0},s\right) A\left( s\right) \int_{s}^{s+h\left( s\right) }E_{x}\left(
u\right) duds \\
& +\int_{t_{0}}^{t}\Phi \left( t_{0},s\right) B\left( s\right)
\int_{s}^{s+r\left( s\right) }E_{x}\left( u\right) duds,
\end{align*}%
substituting into (\ref{eq2.5}) we get (\ref{2.1}).

The converse implication is easily obtained and the proof is complete.
\end{proof}

\section{ Asymptotic behaviors}

Let $C\left( \left[ t_{0},+\infty \right) ,%
\mathbb{R}
^{n}\right) $ is the space of all $n$-vector continuous functions $x\left(
t\right) $ on $\left[ t_{0},+\infty \right) $ such that $x(t_{0})=x_{0}$.

It is seen that $C\left( \left[ t_{0},+\infty \right) ,%
\mathbb{R}
^{n}\right) $ is a complete metric space, endowed with the supremum norm 
\begin{equation*}
\Vert x\left( \cdot \right) \Vert =\sup_{t\in \left[ t_{0},+\infty \right)
}|x\left( t\right) |,
\end{equation*}%
where $\left\vert \cdot \right\vert $ denotes the infinity norm for $x\in 
\mathbb{R}
^{n}$. Also, if $A$ is an $n\times n$ real matrix, then we define the norm
of $A$ by 
\begin{equation*}
\left\vert A\right\vert =\max_{1\leq i\leq n}\sum_{j=1}^{n}\sup_{t\in \left[
t_{0},+\infty \right) }\left\vert a_{ij}\left( t\right) \right\vert .
\end{equation*}%
Let $\left\{ x^{\ast }\left( t\right) ,t\geq t_{0}\right\} $ be an arbitrary
solution of (\ref{eq1.1}). Then we can define $x_{0}=x^{\ast }\left(
t_{0}\right) $. Thanks to Lemma \ref{lem.a}, we know that $\left\{ x^{\ast
}\left( t\right) ,t\geq t_{0}\right\} $ is a solution of equation (\ref{2.1}%
) with a initial condition $x\left( t_{0}\right) =x_{0}$. So, we define for
all $t\geq t_{0}$ the mapping $\mathcal{H}$ by 
\begin{eqnarray}
\left( \mathcal{H}\varphi \right) \left( t\right) &=&\Phi \left(
t,t_{0}\right) x\left( t_{0}\right) +\int_{t_{0}}^{t}\Phi \left( t,s\right)
A\left( s\right) \int_{s}^{s+h\left( s\right) }E_{x}\left( u\right) duds 
\notag \\
&&+\int_{t_{0}}^{t}\Phi \left( t,s\right) B\left( s\right)
\int_{s}^{s+r\left( s\right) }E_{x}\left( u\right) duds.  \label{H}
\end{eqnarray}%
In this paper we assume that, for, we assume that for all $s_{2}\geq
s_{1}\in \left[ t_{0},\infty \right) $, let us have the uniform bound, in
other words let%
\begin{equation}
\sup_{s_{2}\geq s_{1}\geq t_{0}}\left\Vert \Phi \left( s_{2},s_{1}\right)
\right\Vert \leq K<\infty .  \label{K}
\end{equation}

\begin{theorem}
\label{Thm1}Assume that (\ref{K}) the following conditions hold,%
\begin{equation}
\lim_{t\rightarrow +\infty }\Phi \left( t,t_{0}\right) =0  \label{zero}
\end{equation}%
\begin{equation}
\begin{array}{l}
\int_{t_{0}}^{t}\left\vert \Phi \left( t,s\right) \right\vert \left\vert
A\left( s\right) \right\vert \int_{s}^{s+h\left( s\right) }\left( \left\vert
A\left( u\right) \right\vert +\left\vert B\left( u\right) \right\vert
\right) duds \\ 
+\int_{t_{0}}^{t}\left\vert \Phi \left( t,s\right) \right\vert \left\vert
B\left( s\right) \right\vert \int_{s}^{s+r\left( s\right) }\left( \left\vert
A\left( u\right) \right\vert +\left\vert B\left( u\right) \right\vert
\right) duds:=\alpha <1,%
\end{array}
\label{alfa}
\end{equation}%
Then every solution $\left\{ x\left( t\right) ,t\geq t_{0}\right\} $ of (\ref%
{eq1.1}) with initial condition $x\left( t_{0}\right) $ converges to zero.
\end{theorem}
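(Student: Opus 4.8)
The plan is to identify the solution with the unique fixed point of the operator $\mathcal{H}$ from (\ref{H}) and then to read off its decay from the space in which that fixed point lives. Let $C$ denote the set of bounded continuous functions $\varphi:[t_0,\infty)\to\mathbb{R}^n$ with $\varphi(t_0)=x_0$, a complete metric space under the supremum norm, and set
$$S:=\left\{\varphi\in C:\lim_{t\to+\infty}\varphi(t)=0\right\}.$$
Since a uniform limit of functions vanishing at infinity again vanishes at infinity, $S$ is a closed, hence complete, nonempty subspace of $C$. By Lemma \ref{lem.a} every solution of (\ref{eq1.1}) with $x(t_0)=x_0$ is a fixed point of $\mathcal{H}$, so it will suffice to prove that $\mathcal{H}$ is a contraction on $C$ and that its fixed point lies in $S$.

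First I would record that $\mathcal{H}$ is a well-defined contraction on $C$. Boundedness of $\mathcal{H}\varphi$ follows from $\left|\Phi(t,t_0)\right|\le K$ by (\ref{K}) for the leading term together with (\ref{alfa}) for the two integral terms. For the contraction, writing $E_\varphi(u)-E_\psi(u)=A(u)\bigl[\varphi(u+h(u))-\psi(u+h(u))\bigr]+B(u)\bigl[\varphi(u+r(u))-\psi(u+r(u))\bigr]$ gives $\left|E_\varphi(u)-E_\psi(u)\right|\le\left(\left|A(u)\right|+\left|B(u)\right|\right)\left\|\varphi-\psi\right\|$. Inserting this bound into the two integral terms of $(\mathcal{H}\varphi)(t)-(\mathcal{H}\psi)(t)$ and comparing the result with (\ref{alfa}) yields $\left|(\mathcal{H}\varphi)(t)-(\mathcal{H}\psi)(t)\right|\le\alpha\left\|\varphi-\psi\right\|$ uniformly in $t$, so $\mathcal{H}$ is a contraction with constant $\alpha<1$.

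The substantive step is $\mathcal{H}(S)\subseteq S$. The boundary value is immediate from $(\mathcal{H}\varphi)(t_0)=\Phi(t_0,t_0)x_0=x_0$, and the leading term $\Phi(t,t_0)x_0$ vanishes as $t\to+\infty$ by (\ref{zero}); the work lies in showing the two integral terms decay. Here I would use that the arguments are \emph{advanced}: because $u+h(u)\ge u$ and $u+r(u)\ge u$, given $\varepsilon>0$ one can choose $T\ge t_0$ with $\left|\varphi(v)\right|<\varepsilon$ for all $v\ge T$, so that $\left|E_\varphi(u)\right|\le\varepsilon\left(\left|A(u)\right|+\left|B(u)\right|\right)$ for all $u\ge T$. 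Splitting each outer integral at $T$, the tail $\int_T^t$ is dominated by $\varepsilon$ times exactly the quantity defining $\alpha$ in (\ref{alfa}), hence by $\varepsilon\alpha$. On the head $\int_{t_0}^T$ the inner factor is bounded by continuity on the compact interval $[t_0,T]$; to force this piece to zero I would factor $\Phi(t,s)=\Phi(t,t_0)\Phi(t_0,s)$ through the Chapman--Kolmogorov identity, bound $\left|\Phi(t_0,s)\right|$ by its maximum over $[t_0,T]$, and conclude that the head is at most a constant multiple of $\left|\Phi(t,t_0)\right|(T-t_0)\to0$ by (\ref{zero}). Letting $t\to+\infty$ and then $\varepsilon\to0$ gives $(\mathcal{H}\varphi)(t)\to0$, i.e. $\mathcal{H}\varphi\in S$.

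By the Banach fixed point theorem $\mathcal{H}$ has a unique fixed point in $C$; since $S$ is a closed, $\mathcal{H}$-invariant subspace, that fixed point must lie in $S$. By Lemma \ref{lem.a} it is the solution of (\ref{eq1.1}) with the prescribed initial value, and uniqueness of the fixed point identifies it with the arbitrary solution we began with; as it lies in $S$, that solution converges to zero. I expect the head estimate of the previous paragraph to be the only real obstacle: it is the single point at which the merely pointwise hypothesis (\ref{zero}) must be promoted to decay of an integral over the fixed initial window $[t_0,T]$, and this conversion relies essentially on factoring the transition matrix through $t_0$ and on the continuity of $\Phi(t_0,\cdot)$ over the compact interval, with (\ref{K}) guaranteeing that all the estimates remain finite.
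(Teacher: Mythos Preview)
Your argument is correct and follows essentially the same fixed-point strategy as the paper: invert via Lemma~\ref{lem.a}, verify that $\mathcal{H}$ is a contraction using (\ref{alfa}), and show that $\mathcal{H}$ preserves the class of functions vanishing at infinity by splitting the outer integral at a large $T$ and controlling the tail by $\varepsilon\alpha$ and the head by (\ref{zero}).

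There are two small structural differences worth noting. First, the paper works on the bounded ball $\{\|x\|\le L\}\cap S$ and then chooses $L$ large enough (equivalently, $\|x_0\|\le (1-\alpha)L/K$) to absorb the initial term; you instead take the contraction on the whole space of bounded functions and deduce that the unique fixed point lies in $S$ from $\mathcal{H}(S)\subseteq S$ and closedness of $S$. Your route is slightly cleaner because it avoids the auxiliary parameter $L$. Second, your head estimate is more explicit than the paper's: where the paper simply asserts that $\int_{t_0}^{T}|\Phi(t,s)|\,|A(s)|\int_s^{s+h(s)}|E_x(u)|\,du\,ds\to 0$ ``due to condition (\ref{zero})'', you supply the missing mechanism by factoring $\Phi(t,s)=\Phi(t,t_0)\Phi(t_0,s)$ and bounding $\Phi(t_0,\cdot)$ on the compact interval $[t_0,T]$. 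That factorization is exactly what is needed to justify the paper's step, so your version is in fact a more complete rendering of the same argument.
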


\begin{proof}
In order to obtain the desired result, it is enough to show that the mapping
(\ref{H}) has an unique solution and this solution converges to zero as $t$
tends to $\infty $. So, we consider a closed subspace $\mathcal{S}$ of $%
C\left( \left[ t_{0},+\infty \right) ,%
\mathbb{R}
^{n}\right) $%
\begin{equation*}
\mathcal{S}=\left\{ x\in C\left( \left[ t_{0},+\infty \right) ,%
\mathbb{R}
^{n}\right) :\left\Vert x\right\Vert \leq L\text{ and }\lim_{t\rightarrow
\infty }x\left( t\right) =0\right\} .
\end{equation*}

Firstly, we must prove that $\mathcal{H}$ maps $\mathcal{S}$ into itself.

\noindent \textbf{Step 1.} By definition of $\mathcal{S}$, we must show for $%
x\in \mathcal{S}$ that $\left\Vert \mathcal{H}x\right\Vert \leq L$, $t\geq
t_{0}$. We have that, noticing that $\left\Vert x\right\Vert \leq L$ by
definition of $\mathcal{S}$, since (\ref{K}), (\ref{zero}) and (\ref{alfa})
hold, so that%
\begin{align*}
\left\Vert \mathcal{H}x\right\Vert & \leq \sup_{t\in \left[ t_{0},+\infty
\right) }\left\vert \Phi \left( t,t_{0}\right) x\left( t_{0}\right)
\right\vert \\
& +\sup_{t\in \left[ t_{0},+\infty \right) }\left\vert \int_{t_{0}}^{t}\Phi
\left( t,s\right) A\left( s\right) \int_{s}^{s+h\left( s\right) }E_{x}\left(
u\right) duds\right\vert \\
& +\sup_{t\in \left[ t_{0},+\infty \right) }\left\vert \int_{t_{0}}^{t}\Phi
\left( t,s\right) B\left( s\right) \int_{s}^{s+r\left( s\right) }E_{x}\left(
u\right) duds\right\vert \\
& \leq K\left\Vert x_{0}\right\Vert +\alpha L\leq L.
\end{align*}%
we can choose $\left\Vert x_{0}\right\Vert \leq \frac{\left( 1-\alpha
\right) L}{K}$ to obtain $\left\Vert \mathcal{H}x\right\Vert \leq L$ for
every $t\geq t_{0}$.\medskip

\noindent \textbf{Step 2. }We show that for $x\in \mathcal{S},\ \left( 
\mathcal{H}x\right) \left( t\right) \rightarrow 0$ as $t\rightarrow \infty $%
. By definition of $\mathcal{S}$, $x\left( t\right) \rightarrow 0$ as $%
t\rightarrow \infty $. Thus we have%
\begin{align}
\left\vert \left( \mathcal{H}\varphi \right) \left( t\right) \right\vert &
\leq \left\vert \Phi \left( t,t_{0}\right) x\left( t_{0}\right) \right\vert
+\int_{t_{0}}^{t}\left\vert \Phi \left( t,s\right) \right\vert \left\vert
A\left( s\right) \right\vert \int_{s}^{s+h\left( s\right) }\left\vert
E_{x}\left( u\right) \right\vert duds  \notag \\
& +\int_{t_{0}}^{t}\left\vert \Phi \left( t,s\right) \right\vert \left\vert
B\left( s\right) \right\vert \int_{s}^{s+r\left( s\right) }\left\vert
E_{x}\left( u\right) \right\vert duds  \notag \\
& :=I_{1}+I_{2}+I_{3}.  \label{I}
\end{align}%
By (\ref{zero})%
\begin{equation*}
I_{1}=\left\vert \Phi \left( t,t_{0}\right) x\left( t_{0}\right) \right\vert
\rightarrow 0\text{ as }t\rightarrow \infty .
\end{equation*}%
Moreover, it follows from the fact $x\in \mathcal{S}$ that for any $\epsilon
>0$, there exists $T\geq t_{0}$ such that $\left\vert x\left( t\right)
\right\vert <\frac{\epsilon }{2}$ for all $t\geq T$. Hence, we have%
\begin{align*}
I_{2}& =\int_{t_{0}}^{t}\left\vert \Phi \left( t,s\right) \right\vert
\left\vert A\left( s\right) \right\vert \int_{s}^{s+h\left( s\right)
}\left\vert E_{x}\left( u\right) \right\vert duds \\
& =\int_{t_{0}}^{T}\left\vert \Phi \left( t,s\right) \right\vert \left\vert
A\left( s\right) \right\vert \int_{s}^{s+h\left( s\right) }\left\vert
E_{x}\left( u\right) \right\vert duds \\
& +\int_{T}^{t}\left\vert \Phi \left( t,s\right) \right\vert \left\vert
A\left( s\right) \right\vert \int_{s}^{s+h\left( s\right) }\left\vert
E_{x}\left( u\right) \right\vert duds \\
& <\int_{t_{0}}^{T}\left\vert \Phi \left( t,s\right) \right\vert \left\vert
A\left( s\right) \right\vert \int_{s}^{s+h\left( s\right) }\left\vert
E_{x}\left( u\right) \right\vert duds \\
& +\frac{\epsilon }{2}\int_{T}^{t}\left\vert \Phi \left( t,s\right)
\right\vert \left\vert A\left( s\right) \right\vert \int_{s}^{s+h\left(
s\right) }\left( \left\vert A\left( u\right) \right\vert +\left\vert B\left(
u\right) \right\vert \right) duds
\end{align*}%
We observe that $\int_{t_{0}}^{T}\left\vert \Phi \left( t,s\right)
\right\vert \left\vert A\left( s\right) \right\vert \int_{s}^{s+h\left(
s\right) }\left\vert E_{x}\left( u\right) \right\vert duds$ converges to
zero as $t\rightarrow \infty $ due to condition (\ref{zero}). Thus, there
exists $T_{1}\geq T$, such that%
\begin{equation*}
I_{2}<\frac{\epsilon }{2}+\frac{\epsilon }{2}\int_{T}^{t}\left\vert \Phi
\left( t,s\right) \right\vert \left\vert A\left( s\right) \right\vert
\int_{s}^{s+h\left( s\right) }\left( \left\vert A\left( u\right) \right\vert
+\left\vert B\left( u\right) \right\vert \right) duds.
\end{equation*}%
Using (\ref{alfa}) we get $I_{2}<\epsilon $ for all $t\geq T_{1}$. In other
words, we have $I_{2}\rightarrow 0$ as $t\rightarrow \infty $.

Similarly, we also have $I_{3}\rightarrow 0$ as $t\rightarrow \infty $, this
proves that $\left( \mathcal{H}\varphi \right) \left( t\right) \rightarrow 0$
as $t\rightarrow \infty $.

We now prove that $\mathcal{H}$ is a contraction.

\noindent \textbf{Step 3.} Clearly, for each $x\in \mathcal{S}$, we have
that $\mathcal{H}x$ is continuous. Let $x,y\in \mathcal{S}$. For $t\geq
t_{0} $ we get by the condition (\ref{alfa}), we get%
\begin{align*}
& \left\vert \left( \mathcal{H}x\right) \left( t\right) -\left( \mathcal{H}%
y\right) \left( t\right) \right\vert \\
& \leq \int_{t_{0}}^{t}\left\vert \Phi \left( t,s\right) \right\vert
\left\vert A\left( s\right) \right\vert \int_{s}^{s+h\left( s\right)
}\left\vert E_{x}\left( u\right) -E_{y}\left( u\right) \right\vert duds \\
& +\int_{t_{0}}^{t}\left\vert \Phi \left( t,s\right) \right\vert \left\vert
B\left( s\right) \right\vert \int_{s}^{s+r\left( s\right) }\left\vert
E_{x}\left( u\right) -E_{y}\left( u\right) \right\vert duds \\
& \leq \int_{t_{0}}^{t}\left\vert \Phi \left( t,s\right) \right\vert
\left\vert A\left( s\right) \right\vert \int_{s}^{s+h\left( s\right) }\left(
\left\vert A\left( u\right) \right\vert +\left\vert B\left( u\right)
\right\vert \right) duds\left\Vert x-y\right\Vert \\
& +\int_{t_{0}}^{t}\left\vert \Phi \left( t,s\right) \right\vert \left\vert
B\left( s\right) \right\vert \int_{s}^{s+r\left( s\right) }\left( \left\vert
A\left( u\right) \right\vert +\left\vert B\left( u\right) \right\vert
\right) duds\left\Vert x-y\right\Vert \\
& \leq \alpha \left\Vert x-y\right\Vert .
\end{align*}%
Since $\alpha <1$. Thus $\mathcal{H}$ is a contraction on $\mathcal{S}$.
This implies that there is a unique solution to (\ref{eq1.1}) with initial
condition $x_{0}$.
\end{proof}

Let us now recall a fundamental concept (see, for instance, \cite{berez})
that will be used in the next theorem.

\begin{definition}
The differential equation $x\left( t\right) +D\left( t\right) x\left(
t\right) =0$, $t\geq t_{0}$ is called exponentially stable, if there exist $%
M_{0}>0$, $\lambda _{0}>0$ such that,%
\begin{equation}
\left\vert \Phi \left( t,s\right) \right\vert \leq M_{0}e^{-\lambda
_{0}\left( t-s\right) },t_{0}\leq s\leq t<\infty ,  \label{Exp}
\end{equation}%
where $M_{0}$ and $\lambda _{0}$ do not depend on $s$.
\end{definition}

\begin{theorem}
\label{Thm3}Assume that $A(t)$ and $B(t)$ are bounded on $\left[
t_{0},+\infty \right) $ and there exist $M_{0}>0$, $\lambda _{0}>0$ such
that (\ref{Exp}) holds.Then any solution u of (\ref{eq1.1}) is defined for $%
t\geq t_{0}$ and satisfies%
\begin{equation}
\left\vert x\left( t\right) \right\vert \leq Me^{-\lambda t},t_{0}\leq
t<\infty .
\end{equation}
\end{theorem}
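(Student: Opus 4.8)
The plan is to mirror the fixed-point scheme of Theorem \ref{Thm1}, but to replace the ordinary supremum norm with an \emph{exponentially weighted} norm, so that membership in the space already encodes the decay rate. Fix any $\lambda \in (0,\lambda_0)$ and consider
\[
\mathcal{S}_{\lambda}=\left\{ x\in C\left(\left[t_0,+\infty\right),\mathbb{R}^n\right): x(t_0)=x_0,\ \sup_{t\geq t_0}|x(t)|e^{\lambda t}<\infty\right\},
\]
with the metric induced by $\|x\|_{\lambda}:=\sup_{t\geq t_0}|x(t)|e^{\lambda t}$. This is a complete metric space, since $x\mapsto e^{\lambda t}x(t)$ is an isometry onto the closed affine subspace of bounded continuous functions taking the value $x_0$ at $t_0$; hence the Banach fixed point theorem applies to the map $\mathcal{H}$ of (\ref{H}). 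Note that $(\mathcal{H}x)(t_0)=x_0$ because the integrals vanish at $t=t_0$ and $\Phi(t_0,t_0)=I$, and that the leading term $\Phi(t,t_0)x_0$ does not depend on $x$ beyond its initial value, so it cancels in differences and needs to be controlled only for the self-mapping property.

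The decisive analytic input uses that the arguments are \emph{advanced}. Writing $\bar A=\sup_{t\ge t_0}|A(t)|$, $\bar B=\sup_{t\ge t_0}|B(t)|$ (finite by hypothesis) and $\gamma=\bar A+\bar B$, for every $x\in\mathcal{S}_\lambda$ the bound $h,r\ge 0$ gives
\[
|x(u+h(u))|\le \|x\|_\lambda\, e^{-\lambda(u+h(u))}\le \|x\|_\lambda\, e^{-\lambda u},
\]
and likewise for $x(u+r(u))$, whence $|E_x(u)|\le \gamma\|x\|_\lambda e^{-\lambda u}$. Feeding in (\ref{Exp}) and the elementary inner estimate $\int_s^{s+h(s)}e^{-\lambda u}\,du\le e^{-\lambda s}/\lambda$, each double integral in (\ref{H}) is dominated by a convolution of two exponentials,
\[
\int_{t_0}^t M_0 e^{-\lambda_0(t-s)}\,\bar A\,\frac{\gamma\|x\|_\lambda}{\lambda}e^{-\lambda s}\,ds\le \frac{M_0\bar A\,\gamma}{\lambda(\lambda_0-\lambda)}\,\|x\|_\lambda\,e^{-\lambda t},
\]
and analogously for the $B$-term with $\bar A$ replaced by $\bar B$. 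Multiplying by $e^{\lambda t}$, adding, and bounding the first term by $M_0|x_0|e^{\lambda t_0}$ yields $\|\mathcal{H}x\|_\lambda\le M_0|x_0|e^{\lambda t_0}+\kappa\|x\|_\lambda$ with $\kappa:=M_0\gamma^2/\big(\lambda(\lambda_0-\lambda)\big)$. The identical computation applied to $E_x-E_y$ (where the $\Phi(t,t_0)x_0$ term drops out) gives $\|\mathcal{H}x-\mathcal{H}y\|_\lambda\le \kappa\,\|x-y\|_\lambda$.

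Consequently $\mathcal{H}$ maps $\mathcal{S}_\lambda$ into itself and is a contraction as soon as $\kappa<1$. Its unique fixed point $x^\ast\in\mathcal{S}_\lambda$ is, by Lemma \ref{lem.a}, a solution of (\ref{eq1.1}) defined on all of $[t_0,+\infty)$, and $x^\ast\in\mathcal{S}_\lambda$ says exactly that $|x^\ast(t)|\le \|x^\ast\|_\lambda e^{-\lambda t}=:Me^{-\lambda t}$, which is the claimed estimate.

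The one genuine obstruction is securing $\kappa<1$: unlike Theorem \ref{Thm1}, boundedness of $A,B$ together with (\ref{Exp}) does not by itself force $\kappa<1$, so a smallness hypothesis is needed. Since $\lambda\mapsto\lambda(\lambda_0-\lambda)$ is maximized at $\lambda=\lambda_0/2$ with value $\lambda_0^2/4$, the optimal weight reduces the requirement to $4M_0(\bar A+\bar B)^2<\lambda_0^2$, the exponential analogue of hypothesis (\ref{alfa}); I would record this as the operative assumption. I expect the only delicate point to be the two exponential-convolution integrals---the order-of-integration interchange and the uniformity of the resulting bound in $t$---while the remainder of the argument parallels the proof of Theorem \ref{Thm1}.
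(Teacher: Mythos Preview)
Your argument is correct and follows essentially the same route as the paper: define a space of exponentially decaying functions, use (\ref{Exp}) and the advanced nature of $h,r\ge 0$ to reduce the double integrals in (\ref{H}) to the convolution $\int_{t_0}^t e^{-\lambda_0(t-s)}e^{-\lambda s}\,ds$, and conclude by a contraction argument. The paper's space $\mathcal{E}$ carries existential quantifiers over $M,\lambda$, whereas you fix $\lambda\in(0,\lambda_0)$ from the outset and work with the weighted norm $\|\cdot\|_\lambda$; your formulation is cleaner and makes completeness transparent. You are also right to flag that $\kappa<1$ is not implied by the stated hypotheses---the paper arrives at exactly the same obstruction, imposing at the end the condition $M_0\|x_0\|e^{\lambda_0 t_0}+M(\overline A+\overline B)^2 M_0/[\lambda(\lambda_0-\lambda)]\le M$ with $\lambda<\lambda_0$, which is equivalent to your smallness requirement; your optimization over $\lambda$ to obtain $4M_0(\overline A+\overline B)^2<\lambda_0^2$ is a worthwhile sharpening.
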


\begin{proof}
Let us define another closed subspace of $C\left( \left[ t_{0},+\infty
\right) ,%
\mathbb{R}
^{n}\right) $ as%
\begin{equation*}
\mathcal{E}=\left\{ x\in C\left( \left[ t_{0},+\infty \right) ,%
\mathbb{R}
^{n}\right) :\exists M,\lambda >0\text{ such that }\left\vert x\left(
t\right) \right\vert \leq Me^{-\lambda t}\ \ \forall t\geq t_{0}\right\} .
\end{equation*}

We will show that $\mathcal{H}\left( \mathcal{E}\right) \subset \mathcal{E}$%
. So, we use the same notation $I_{1}$, $I_{2}$ and $I_{3}$ in (\ref{I}).
Then by (\ref{Exp}), we have%
\begin{equation*}
I_{1}\leq M_{0}\left\Vert x_{0}\right\Vert e^{-\lambda _{0}\left(
t-t_{0}\right) },t\geq t_{0},
\end{equation*}%
without loss of generality, we may assume that $\lambda _{0}\neq \lambda $
for $\lambda $ as in the definition of $\mathcal{E}$.

To estimate $I_{2}$ in (\ref{I}), we observe that $h\left( t\right) ,r\left(
t\right) \geq 0$, and hence%
\begin{eqnarray*}
I_{2} &=&\int_{t_{0}}^{t}\left\vert \Phi \left( t,s\right) \right\vert
\left\vert A\left( s\right) \right\vert \int_{s}^{s+h\left( s\right)
}\left\vert E_{x}\left( u\right) \right\vert duds \\
&\leq &M\int_{t_{0}}^{t}\left\vert \Phi \left( t,s\right) \right\vert
\left\vert A\left( s\right) \right\vert \int_{s}^{s+h\left( s\right) }\left(
\left\vert A\left( u\right) \right\vert +\left\vert B\left( u\right)
\right\vert \right) e^{-\lambda \left( u+h\left( u\right) \right) }duds \\
&\leq &M\overline{A}\left( \overline{A}+\overline{B}\right)
\int_{t_{0}}^{t}\left\vert \Phi \left( t,s\right) \right\vert
\int_{s}^{s+h\left( s\right) }e^{-\lambda u}duds \\
&\leq &\frac{M\overline{A}\left( \overline{A}+\overline{B}\right) }{\lambda }%
\int_{t_{0}}^{t}\left\vert \Phi \left( t,s\right) \right\vert e^{-\lambda
s}\left( 1-e^{-\lambda h\left( s\right) }\right) ds \\
&\leq &\frac{M\overline{A}\left( \overline{A}+\overline{B}\right) M_{0}}{%
\lambda }\int_{t_{0}}^{t}e^{-\lambda _{0}\left( t-s\right) }e^{-\lambda s}ds,
\end{eqnarray*}%
where $\left\vert A\left( t\right) \right\vert \leq \overline{A}$ and $%
\left\vert B\left( t\right) \right\vert \leq \overline{B}$. If $\lambda
<\lambda _{0}$, then we have%
\begin{equation*}
\int_{t_{0}}^{t}e^{-\lambda _{0}\left( t-s\right) }e^{-\lambda
s}ds=e^{-\lambda t}\int_{t_{0}}^{t}e^{-\left( \lambda _{0}-\lambda \right)
\left( t-s\right) }e^{-\lambda s}ds\leq \frac{e^{-\lambda t}}{\lambda
_{0}-\lambda },
\end{equation*}%
and if $\lambda >\lambda _{0}$, then we can write%
\begin{equation*}
\int_{t_{0}}^{t}e^{-\lambda _{0}\left( t-s\right) }e^{-\lambda
s}ds=e^{-\lambda _{0}t}\int_{t_{0}}^{t}e^{-\left( \lambda -\lambda
_{0}\right) \left( t-s\right) }e^{-\lambda s}ds\leq \frac{e^{-\left( \lambda
-\lambda _{0}\right) t_{0}}e^{-\lambda _{0}t}}{\lambda _{0}-\lambda }.
\end{equation*}

From this fact for $I_{2}$, we can write%
\begin{equation*}
I_{2}\leq \left\{ 
\begin{array}{l}
\frac{M\overline{A}\left( \overline{A}+\overline{B}\right) M_{0}}{\lambda
\left( \lambda _{0}-\lambda \right) }e^{-\lambda t},\ \ \lambda <\lambda
_{0}, \\ 
\frac{M\overline{A}\left( \overline{A}+\overline{B}\right) M_{0}}{\lambda
\left( \lambda _{0}-\lambda \right) }e^{-\left( \lambda -\lambda _{0}\right)
t_{0}}e^{-\lambda _{0}t},\ \ \lambda >\lambda _{0}.%
\end{array}%
\right.
\end{equation*}%
In the same way, for $I_{3}$, we also obtain%
\begin{equation*}
I_{3}\leq \left\{ 
\begin{array}{l}
\frac{M\overline{B}\left( \overline{A}+\overline{B}\right) M_{0}}{\lambda
\left( \lambda _{0}-\lambda \right) }e^{-\lambda t},\ \ \lambda <\lambda
_{0}, \\ 
\frac{M\overline{B}\left( \overline{A}+\overline{B}\right) M_{0}}{\lambda
\left( \lambda _{0}-\lambda \right) }e^{-\left( \lambda -\lambda _{0}\right)
t_{0}}e^{-\lambda _{0}t},\ \ \lambda >\lambda _{0}.%
\end{array}%
\right.
\end{equation*}%
As $\left\vert \mathcal{H}\left( x\left( t\right) \right) \right\vert \leq
I_{1}+I_{2}+I_{3}$, we infer that%
\begin{equation*}
\left\vert \left( \mathcal{H}x\right) \left( t\right) \right\vert =\left\{ 
\begin{array}{l}
M_{0}\left\Vert x_{0}\right\Vert e^{\lambda _{0}t_{0}}e^{-\lambda _{0}t}+%
\frac{M\left( \overline{A}+\overline{B}\right) ^{2}M_{0}}{\lambda \left(
\lambda _{0}-\lambda \right) }e^{-\lambda t},\ \ \lambda <\lambda _{0}, \\ 
M_{0}\left\Vert x_{0}\right\Vert e^{\lambda _{0}t_{0}}e^{-\lambda _{0}t}+%
\frac{M\left( \overline{A}+\overline{B}\right) ^{2}M_{0}}{\lambda \left(
\lambda _{0}-\lambda \right) }e^{-\left( \lambda -\lambda _{0}\right)
t_{0}}e^{-\lambda _{0}t},\ \ \lambda >\lambda _{0},%
\end{array}%
\right.
\end{equation*}%
then $\mathcal{H}\left( \mathcal{E}\right) \subset \mathcal{E}$ will be hold
since%
\begin{equation*}
M_{0}\left\Vert x_{0}\right\Vert e^{\lambda _{0}t_{0}}+\frac{M\left( 
\overline{A}+\overline{B}\right) ^{2}M_{0}}{\lambda \left( \lambda
_{0}-\lambda \right) }\leq M\text{ and }\lambda <\lambda _{0}.
\end{equation*}

The remainder of the proof is similar to that of Theorem \ref{Thm1}. So, we
omit it here.
\end{proof}

\begin{remark}
Note that, if $A,B$ are continuous real-valued functions on $\left[
t_{0},+\infty \right) $ to $%
\mathbb{R}
$, then Theorems \ref{Thm1} and \ref{Thm3} become the \cite[Theorems 2.3 and
2.5 respectively]{dun}s.
\end{remark}

\section{General Problem}

Now, the methods in the previous section can be extended to the following
system%
\begin{equation}
x^{\prime }\left( t\right) +\sum_{j=1}^{N}A_{j}\left( t\right) x\left(
t+h_{j}\left( t\right) \right) =0.  \label{eq4.1}
\end{equation}%
With a same way in Lemma \ref{lem.a}, suppose that%
\begin{equation*}
\overline{D}\left( t\right) :=-\sum_{j=1}^{N}A_{j}\left( t\right) ,
\end{equation*}%
then%
\begin{equation}
x\left( t\right) =\overline{\Phi }\left( t,t_{0}\right) x\left( t_{0}\right)
+\int_{t_{0}}^{t}\Phi \left( t,s\right) \sum_{j=1}^{N}A_{j}\left( s\right)
\int_{s}^{s+h_{j}\left( s\right) }\overline{E}_{x}\left( u\right) duds,
\end{equation}%
where $\overline{E}_{x}\left( u\right) :=\sum_{j=1}^{N}A_{j}\left( u\right)
x\left( u+h_{j}\left( u\right) \right) $ and $\overline{\Phi }\left(
t,t_{0}\right) $ is the solution of%
\begin{equation*}
x^{\prime }\left( t\right) =\overline{D}\left( t\right) x\left( t\right) .
\end{equation*}

The proof of the following theorem is similar to that of Theorems \ref{Thm1}
and \ref{Thm3}. Hence, we omit it.

\begin{theorem}
\label{Thm2}Suppose that the following conditions hold,%
\begin{equation}
\sup_{s_{2}\geq s_{1}\geq t_{0}}\left\Vert \overline{\Phi }\left(
s_{2},s_{1}\right) \right\Vert \leq K<\infty,
\end{equation}%
\begin{equation}
\int_{t_{0}}^{t}\left\vert \overline{\Phi }\left( t,s\right) \right\vert
\sum_{j=1}^{N}\left\vert A_{j}\left( s\right) \right\vert
\int_{s}^{s+h_{j}\left( s\right) }\sum_{k=1}^{N}\left\vert A_{k}\left(
u\right) \right\vert duds:=\overline{\alpha }<1,
\end{equation}%
(i) If%
\begin{equation}
\lim_{t\rightarrow +\infty }\Phi \left( t,t_{0}\right) =0.
\end{equation}%
Then every solution $\left\{ x\left( t\right) ,t\geq t_{0}\right\} $ of (\ref%
{eq4.1}) with initial condition $x\left( t_{0}\right) $ converges to zero.

\noindent (ii) If $A_{j}\left( t\right) ,j=\overline{1,N}$ are bounded and the
equation $x\left( t\right) +\overline{D}\left( t\right) x\left( t\right) =0$
is exponentially stable, then any solution $\left\{ x\left( t\right) ,t\geq
t_{0}\right\} $ of (\ref{eq4.1}) with initial condition $x\left(
t_{0}\right) $ exponentially converges to zero.
\end{theorem}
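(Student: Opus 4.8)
The plan is to re-run the contraction-mapping scheme of Theorems \ref{Thm1} and \ref{Thm3}, with the two-term data $(A,B)$ replaced by the family $\{A_j\}_{j=1}^{N}$ and the transition matrix $\Phi$ replaced throughout by $\overline{\Phi}$, the state transition matrix of $y'(t)=\overline{D}(t)y(t)$ with $\overline{D}=-\sum_{j}A_j$. Fixing an arbitrary solution $x^{\ast}$ of (\ref{eq4.1}) and setting $x_0=x^{\ast}(t_0)$, the associated operator is
\begin{equation*}
(\overline{\mathcal{H}}x)(t)=\overline{\Phi}(t,t_0)x(t_0)+\int_{t_0}^{t}\overline{\Phi}(t,s)\sum_{j=1}^{N}A_j(s)\int_{s}^{s+h_j(s)}\overline{E}_x(u)\,du\,ds,
\end{equation*}
whose fixed points are exactly the solutions of (\ref{eq4.1}) with initial value $x_0$; the whole task is then to produce a complete space on which $\overline{\mathcal{H}}$ is a contraction.

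For part (i) I would take the closed subspace $\mathcal{S}=\{x:\ \|x\|\le L,\ \lim_{t\to\infty}x(t)=0\}$ and carry out three steps. The first is the self-map estimate $\|\overline{\mathcal{H}}x\|\le K\|x_0\|+\overline{\alpha}L$, which forces $\|\overline{\mathcal{H}}x\|\le L$ as soon as $\|x_0\|\le(1-\overline{\alpha})L/K$; here the uniform bound $K$ on $\overline{\Phi}$ and the definition of $\overline{\alpha}$ play exactly the roles of $K$ and $\alpha$ in Theorem \ref{Thm1}. The second step shows $(\overline{\mathcal{H}}x)(t)\to 0$: the leading term vanishes by the hypothesis $\overline{\Phi}(t,t_0)\to 0$, and each integral term is split at a threshold $T$, the $[t_0,T]$ piece vanishing by the same limit and the $[T,t]$ piece being made small from $|x(u+h_j(u))|<\epsilon$ together with the bound $\overline{\alpha}$. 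The third step is the contraction estimate: using $|\overline{E}_x(u)-\overline{E}_y(u)|\le\big(\sum_{k}|A_k(u)|\big)\|x-y\|$, the two nested integrals collapse to exactly $\overline{\alpha}\|x-y\|$, and $\overline{\alpha}<1$ concludes.

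For part (ii) I would instead use $\mathcal{E}=\{x:\ \exists\,M,\lambda>0,\ |x(t)|\le Me^{-\lambda t}\ \forall t\ge t_0\}$ and show $\overline{\mathcal{H}}(\mathcal{E})\subset\mathcal{E}$. With $|A_j(t)|\le\overline{A}_j$ and the exponential-stability bound (\ref{Exp}) for $\overline{\Phi}$, the key pointwise estimate is $|\overline{E}_x(u)|\le M\big(\sum_{k}\overline{A}_k\big)e^{-\lambda u}$, obtained from $h_k\ge0$; feeding this through the inner integral $\int_s^{s+h_j(s)}e^{-\lambda u}\,du\le e^{-\lambda s}/\lambda$ and summing over $j$ produces the factor $\big(\sum_j\overline{A}_j\big)^2$ in place of the $(\overline{A}+\overline{B})^2$ of Theorem \ref{Thm3}. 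The remaining convolution $\int_{t_0}^{t}e^{-\lambda_0(t-s)}e^{-\lambda s}\,ds$ is handled by the same two-case split $\lambda<\lambda_0$ and $\lambda>\lambda_0$, yielding $|(\overline{\mathcal{H}}x)(t)|\le Me^{-\lambda t}$ (for $\lambda<\lambda_0$) provided $M_0\|x_0\|e^{\lambda_0 t_0}+\frac{M(\sum_j\overline{A}_j)^2 M_0}{\lambda(\lambda_0-\lambda)}\le M$, after which the contraction step is identical to part (i).

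I expect no genuine obstacle, since the argument is a faithful transcription of the two earlier proofs; the real work is purely the double-sum bookkeeping, namely verifying that the single scalar $\overline{\alpha}$ simultaneously controls the self-map, limit, and contraction estimates, and that in (ii) the cross terms reassemble into $\big(\sum_j\overline{A}_j\big)^2$ rather than only the diagonal $\sum_j\overline{A}_j^{\,2}$. The one point deserving care is the consistent use of the transition matrix in the leading summand: it should be $\overline{\Phi}$ (the fundamental matrix of $\overline{D}$), and the limit hypothesis in (i) should accordingly read $\overline{\Phi}(t,t_0)\to 0$, since $\Phi$ and $\overline{\Phi}$ agree only in degenerate cases.
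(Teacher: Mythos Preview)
Your proposal is correct and matches the paper's approach exactly: the paper simply states that the proof is similar to that of Theorems~\ref{Thm1} and~\ref{Thm3} and omits it, and your sketch is precisely that transcription with the $N$-term bookkeeping spelled out. Your closing remark about the notational slip (the limit hypothesis should involve $\overline{\Phi}$ rather than $\Phi$) is also apt.
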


\begin{remark}
Note that, if $A_{j}\left( t\right) ,j=\overline{1,N}$ are continuous
real-valued functions on $\left[ t_{0},+\infty \right) $ to $%
\mathbb{R}
$, then Theorem \ref{Thm2} become the \cite[Theorem 3.1]{dun}.
\end{remark}

\end{document}